\renewcommand{\ldots}{\ensuremath{\dotsc}}
\newcommand{\bgx}{{\mathcal B}}
\title{
Convergence theory for preconditioned \\eigenvalue solvers in a nutshell
\thanks{
Communicated by Nicholas Higham.
A preliminary version is posted at \url{http://arxiv.org}.
}
}
\author{
Merico E. Argentati\thanks{
Department of Mathematical and Statistical Sciences,
University Colorado Denver, P.O. Box 173364, Campus Box 170,
Denver, CO 80217-3364.
(\email{merico.argentati at ucdenver.edu}).
}
\and\
Andrew V. Knyazev\thanks{
Mitsubishi Electric Research Laboratories,
201 Broadway, Cambridge, MA 02139-1955.
(\email{andrew.knyazev at merl.com}).
}
\and\
Klaus Neymeyr\thanks{Universit\"at Rostock, Institut f\"ur Mathematik,
Ulmenstra{\ss}e 69, 18055 Rostock, Germany.
(\email{klaus.neymeyr at mathematik.uni-rostock.de and ming.zhou at mathematik.uni-rostock.de}). 
}
\and\     \\
Evgueni E. Ovtchinnikov\thanks{Numerical Analysis Group, Building R18,
STFC Rutherford Appleton Laboratory, Harwell Oxford,
Didcot, Oxfordshire, OX11 0QX, UK.
(\email{evgueni.ovtchinnikov at stfc.ac.uk}). 
}
\and\
Ming Zhou${}^\S$
}
\begin{document}
\maketitle

\begin{abstract}
Preconditioned iterative methods for numerical solution of 
large matrix eigenvalue problems
are increasingly gaining importance in various application areas, 
ranging from material sciences to data mining. 
Some of them,
e.g., those using multilevel preconditioning 
for elliptic differential operators or graph Laplacian eigenvalue problems,
exhibit almost optimal complexity in practice, 
i.e., their computational costs to calculate a fixed number of eigenvalues and eigenvectors 
grow linearly with the matrix problem size.
Theoretical justification of their optimality requires convergence rate bounds 
that do not deteriorate with the increase of the 
problem size.
Such bounds were pioneered by E.~D'yakonov over three decades ago, 
but to date only a handful have been derived, 
mostly for symmetric eigenvalue problems.
Just a few of known bounds are sharp.  
One of them is proved in
[\doi{10.1016/S0024-3795(01)00461-X}] for the simplest preconditioned eigensolver 
with a fixed step size.
The original proof has been greatly simplified and shortened in [\doi{10.1137/080727567}]
by using a gradient flow integration approach.
In the present work, we give an even more succinct
proof,
using novel ideas based on Karush-Kuhn-Tucker theory and nonlinear programming.
\end{abstract}

\begin{keywords}
{symmetric; preconditioner; eigenvalue; eigenvector; Rayleigh quotient; gradient; 
iterative method; Karush--Kuhn--Tucker theory.}
\end{keywords}

\begin{AMS}
65F15 
65K10 
65N25 
\end{AMS}
\bigskip

\begin{center}
 \it Dedicated to the memory of Evgenii G.~D'yakonov, Moscow, Russia, 1935--2006.
\end{center}
\bigskip


\pagestyle{myheadings}
\thispagestyle{plain}
\markboth{M.E. Argentati, A.V. Knyazev, K. Neymeyr, E.E. Ovtchinnikov, M. Zhou}
{Preconditioned eigensolver convergence in a nutshell}
\section{Introduction}

Preconditioning is a technique developed originally for
the iterative solution of linear systems
that aims at the acceleration of convergence of the iterations.
In its simplest form, 
the system $Ax = b$
is multiplied by a matrix $T$
such that the spectral condition number of $TA$,
the ratio of the largest to the smallest singular value thereof,
is considerably smaller than that of $A$,
which generally leads to faster convergence.

\if 0
We define a preconditioner of a matrix $A$ as a matrix 
$T$ such that their product $TA$ has a smaller condition number than $A$.
Instead of solving a linear system $Ax=b$, one may solve
a preconditioned system $T(Ax-b)=0$. The small condition number
of $TA$ is expected to lead to fast convergence of iterations.
\fi

Iterative methods for solving linear systems normally do not require
$A$ and $T$ to be explicitly formed as matrices: it is sufficient that 
matrix-vector multiplications are implemented and performed via user-defined procedures.
The same is true for iterative methods
that compute 
eigenvalues and eigenvectors
of a very large matrix,
as, e.g.,\
in \cite{Yamada06}, calculating one eigenvector of a 100-billion size 
matrix, or in~\cite{google98}.


A classical application area for preconditioned solvers 
is the discretized boundary value problems for
elliptic partial differential operators; see, e.g.,\ \cite{DYA1996}.
With multigrid preconditioning, 
preconditioned solvers 
may achieve linear complexity on problems from this area; 
see, e.g.,\ \cite{kn2003} and references there for symmetric eigenvalue problems.
D'yakonov seminal work, summarized in \cite{DYA1996}, proposes ``spectrally equivalent'' 
preconditioning for elliptic operator eigenvalue problems in order to guarantee convergence 
that does not deteriorate with the increasing dimension of the discretized problem.
Owing to this,
for large enough problems such preconditioners
outperform direct solvers, 
which factorize
the original sparse matrix $A$.
Inevitable matrix fill-ins, 
especially prominent in discretized 
differential problems in more than two spatial dimensions, 
destroy the matrix sparsity, resulting in 
computer memory overuse and non-optimal performance. 

Preconditioning has also 
long since been
a key technique in \emph{ab initio} calculations 
in material sciences; see, e.g.,\ 
\cite{abinit2008} and references therein. 
In the last decade, preconditioning for graphs is attracting growing attention 
as a tool for achieving an optimal complexity for large data mining problems, 
e.g., for graph bisection and image segmentation using graph Laplacian and 
Fiedler vectors since \cite{k2003}; for recent work see, e.g.,\ \cite{vss14}.

Preconditioned iterative methods for the original linear system $Ax - b = 0$ are in many cases
mathematically equivalent to standard iterative methods applied for the preconditioned system 
$T(Ax - b) = 0$.
For example, the classical 
Richardson iteration step 
applied to the preconditioned system
becomes
\begin{align}
  \label{e.richls}
x_{n+1}=x_n-\tau_n T(Ax_n-b),
\end{align}
where $\tau_n$ is a suitably chosen scalar.

Turning now to eigenvalue problems, let us
consider the computation of an eigenvector
of a real symmetric positive definite matrix $A$
corresponding to its smallest eigenvalue.
Borrowing an argument from \cite{oxymoron},
suppose that the targeted eigenvalue $\lambda_*$,
or a sufficiently good approximation thereof, is known. 
Then the corresponding
eigenvector can be computed
by solving a 
homogeneous linear system $(A - \lambda_*I)x = 0$, or, 
equivalently,
the system
$T(A - \lambda_*I)x = 0$,
where $I$ is an identity. 
The Richardson iteration step 
now becomes
\begin{align}
  \label{e.richhls}
  x_{n+1}=x_n-\tau_n T(A-\lambda_*I)x_n. 
\end{align}

Theoretically,
the best preconditioners for $Ax - b = 0$ and $(A-\lambda_*I)x=0$ are, correspondingly, 
$T=A^{-1}$ and  $T=(A-\lambda_*I)^{\dagger}$, where $\dagger$ denotes a pseudo-inverse, 
making both Richardson iteration schemes, \eqref{e.richls} and \eqref{e.richhls}, 
converge in a single step with $\tau_n=1$. 
Under the standard assumption $T\approx A^{-1}$, both in \eqref{e.richls} and \eqref{e.richhls},
convergence theory is straightforward, e.g.,\ in terms of the spectral radius $\rho(I-TA)$ of $I-TA$.
Sharp explicit convergence bounds, not relying on generic constants, 
can be derived in the form of inequalities 
that allow one to determine whether the convergence deteriorates 
with the increasing problem size
by analyzing every term in the bound. 
 
For some classes of eigenvalue problems, the efficiency of choosing $T\approx A^{-1}$ 
has been demonstrated,
both numerically and theoretically, in \cite{TEV2000,KNN2003}. 
This choice allows the easy adaptation of 
a vast variety of preconditioners already developed for linear systems
to the eigensolvers.

In practice,
the theoretical value $\lambda_*$ in the Richardson iteration above
has to be replaced
with its approximation. 
A standard choice for $\lambda_*$ is a Rayleigh quotient function 
$\lambda(x)=x^T Ax/x^T x$, leading to
\begin{align}
  \label{e.rich}
  x_{n+1}=x_n-\tau_n T(A-\lambda(x_n)I)x_n. 
\end{align}

It is well known that the  Rayleigh quotient $\lambda(x_n)$ gives a high quality (quadratic) approximation 
of the eigenvalue $\lambda_*$,
if the sequence $x_n$ converges to the corresponding eigenvector. 
Thus, asymptotically as $\lambda(x_n){\,\to\,}\lambda_*,$ where $n{\,\to\,}\infty$, 
methods \eqref{e.richhls} and \eqref{e.rich} are equivalent, 
and so may be their asymptotic convergence rate bounds.  
However, asymptotic convergence rate bounds naturally contain generic constants, 
which are independent of $n\to\infty$, 
but may depend on the problem size.

Due to the changing value $\lambda(x_n)$, 
a non-asymptotic theoretical convergence analysis is much more difficult,
compared to the case for linear systems, even for the simplest methods, 
such as the Richardson iteration  \eqref{e.rich}.
D'yakonov pioneering work from the eighties, summarized in \cite[Chapter 9]{DYA1996}, 
includes the first non-asymptotic convergence bounds for preconditioned eigensolvers 
proving their linear convergence with a rate, which can be bounded above independently of the problem size.

Just a few of the known bounds are sharp. 
One of them is proved for the simplest preconditioned eigensolver with a fixed step size \eqref{e.rich} 
in a series of papers by Neymeyr over a decade ago; see \cite{KNN2003} and references therein.
The original proof has been greatly simplified and shortened in \cite{kn09}
by using a gradient flow integration approach. 

In this paper
we present a new self-contained proof of 
a sharp convergence rate bound from \cite{KNN2003}
for the preconditioned eigensolver \eqref{e.rich}, 
Theorem \ref{t.1}.
Following the geometrical approach of
\cite{KNN2003,kn09},
we reformulate the problem of finding the convergence bound for \eqref{e.rich}
as a constrained optimization problem for the Rayleigh quotient.
The main novelty of the proof is that here we use inequality constraints, which
brings to the scene the Karush-Kuhn-Tucker (KKT) theory; 
see, e.g.\ \cite{fletcher,NOW2006}. 
KKT conditions allow us to reduce 
our convergence analysis to the simplest scenario in two dimensions,
which is the key step in the proof.
We have also found several simplifications in the two dimensional convergence analysis,
compared to that of  \cite{KNN2003,kn09}.
We believe that 
the new proof 
will greatly enhance the 
understanding of the convergence behavior of increasingly 
popular preconditioned eigensolvers,
whose application area is 
quickly expanding: 
see, e.g.,\ \cite{ksu13,kps14,N2012,NOZ2011,sx14,svx15,vss14}.

\section{Convergence rate bound}

We consider a real generalized eigenvalue problem 
 $Ax=\lambda Bx$
with real symmetric
positive definite matrices $A$ and $B$.
The objective is to approximate iteratively the smallest eigenvalue $\lambda_1$
by minimizing the Rayleigh quotient
$\lambda(x)=x^T Ax/x^T Bx$.
A direct formulation of the convergence analysis with respect to this form of the
eigenvalue problem has some disadvantages. Instead, the inverted form $Bx=\mu Ax$
with $\mu=1/\lambda$ results in more compact representation of the problem
and the proof (many inverses like $A^{-1}$ and $1/\lambda$ can be avoided),
cf. \cite{KNN2003,kn09}.
For this inverted form the objective is to approximate the largest eigenvalue $\mu_1$
of $Bx=\mu Ax$ by maximizing the Rayleigh quotient $\mu(x)=x^T Bx/x^T Ax$.
We denote the eigenvalues by $\mu_1>\ldots>\mu_m>0$, which can have arbitrary multiplicity.
Corresponding eigenspaces are denoted by $\mathcal{V}_1,\ldots,\mathcal{V}_m$.

The increase of $\mu(x)$ can be achieved by 
correcting the current iterate $x$ along the preconditioned
gradient of the Rayleigh quotient, i.e.
\begin{equation}
  \label{e.preeig}
  x'=x+\frac1{\mu(x)} T(Bx-\mu(x) Ax);
\end{equation}
see \cite{KNN2003,OVT2006,kn09} and references therein.
If $B=I$, then $\mu(x)=1/\lambda(x)$ and method \eqref{e.preeig}
turns into \eqref{e.rich} with $\tau_n=1$, discussed in the Introduction. 

In all our prior work on preconditioned eigensolvers for symmetric eigenvalue problems, 
including \cite{KNN2003,kn09},
we have always assumed that the preconditioner $T$ is a symmetric and
positive definite matrix, typically satisfying conditions 
  \begin{equation}
    \label{e.basss}
(1-\gamma)z^T T^{-1}z\leq z^T Az\leq (1+\gamma) z^T T^{-1}z,
                                   \,\forall z,
\text{ for a given }\gamma\in[0,1),
  \end{equation}
or equivalent, up to the scaling of $T$. 
Recently, the authors of \cite{bdk13} have noticed and demonstrated 
that $T$ does not have to be symmetric positive definite, and
a less restrictive assumption
   \begin{equation}
    \label{e.bass}
s_{\max}\left(I-A^{1/2}TA^{1/2}\right)\leq\gamma<1,
  \end{equation}
can be used instead,
where $s_{\max}$ denotes the matrix largest singular value, 
and $A^{1/2}$ is the symmetric positive definite square root of $A$.
It is verified in \cite{bdk13}
that \eqref{e.basss} and \eqref{e.bass} are equivalent if $T$ is symmetric and
positive definite. 

In what follows,
we give a complete and concise proof 
of the following convergence rate bound, first proved in \cite{KNN2003,kn09},
\begin{theorem}
  \label{t.1}
If $\mu_{i+1}<\mu(x)<\mu_i$
and $T$ satisfies \eqref{e.bass}, 
then for $x^\prime$ given by \eqref{e.preeig} it holds
that 
either $\mu(x')\ge\mu_i$ or
  \begin{equation}
    \label{e.muest}
    0<\frac{\mu_i-\mu(x')}{\mu(x')-\mu_{i+1}}
\leq\sigma^2
    \frac{\mu_i-\mu(x)}{\mu(x)-\mu_{i+1}},\quad
\sigma=\gamma +(1-\gamma)\frac{\mu_{i+1}}{\mu_i}.
  \end{equation}
\end{theorem}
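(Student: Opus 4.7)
My plan is to follow the geometric strategy of \cite{KNN2003,kn09} --- recasting the bound \eqref{e.muest} as a worst-case optimization --- but to replace their gradient-flow argument with a KKT reduction to a two-dimensional invariant subspace of $B$.

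The first move is a spectral transformation to the case $A=I$. Setting $\hat x = A^{1/2}x$, $\hat B = A^{-1/2}BA^{-1/2}$, and $\hat T = A^{1/2}TA^{1/2}$ preserves all Rayleigh quotients, and condition \eqref{e.bass} becomes $\|I-\hat T\|\le\gamma$. Writing $\hat T = I - E$ with $s_{\max}(E)\le\gamma$ and dropping hats, iteration \eqref{e.preeig} takes the form $\mu(x)\,y = Bx - Er$ with $r = Bx - \mu(x)\,x$. For any vector $z$ with $\|z\|\le\gamma\|r\|$ the rank-one matrix $E = zr^T/\|r\|^2$ realizes $Er = z$ with $s_{\max}(E)\le\gamma$; conversely $\|Er\|\le\gamma\|r\|$ always. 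Hence, as $E$ ranges over its admissible set, the set of achievable $\mu(x)\,y$ is exactly the closed Euclidean ball of center $Bx$ and radius $\gamma\|r\|$, and \eqref{e.muest} becomes a uniform statement over $y$ in this ball.

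The crux is the worst-case analysis. Consider the constrained maximization of $\Phi(x,y) = (\mu_i - \mu(y))/(\mu(y)-\mu_{i+1})$ over the set $S = \{(x,y)\,:\,\mu(x) = \mu,\ \|\mu y - Bx\|^2 \le \gamma^2\|Bx-\mu x\|^2,\ \mu_{i+1}<\mu(y)<\mu_i\}$. Scale invariance in both $x$ and $y$ allows one to compactify by fixing $x^Tx = y^Ty = 1$, so a maximizer exists unless the supremum is attained on the boundary $\mu(y)\to\mu_i$ --- in which case $\Phi\to 0$ and the bound is trivial. At an interior maximizer the inequality must be active, and the KKT stationarity conditions produce Lagrange multipliers linking the gradients of $\mu(y)$, of $\mu(x)$, and of the active constraint. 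The $y$-stationarity forces $By - \mu(y)\,y$ to be parallel to $\mu y - Bx$; expanding the $x$-stationarity in the eigenbasis of $B$ and writing $\xi_j = v_j^T x$ yields a linear system whose nontrivial solutions are supported on at most two indices, and a monotonicity argument using $\mu\in(\mu_{i+1},\mu_i)$ pins these indices to $\{i,i+1\}$. Both $x$ and $y$ therefore lie in the invariant subspace $\mathcal V_i\oplus\mathcal V_{i+1}$. I expect this step to be the main obstacle: the KKT multipliers must be exploited carefully enough to exclude extremizers with three or more active eigen-components, and the degenerate cases ($\xi_i = 0$, $\mu(y)=\mu_i$, or active-constraint singularity) must be absorbed into the ``either $\mu(x')\ge\mu_i$'' alternative of the statement.

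In the reduced two-dimensional subspace $B$ acts as $\operatorname{diag}(\mu_i,\mu_{i+1})$. Writing $x = (\xi_1,\xi_2)$ and $\mu y = (\mu_i\xi_1 - z_1,\,\mu_{i+1}\xi_2 - z_2)$, a direct substitution yields
\[
  \frac{\mu_i - \mu(y)}{\mu(y)-\mu_{i+1}} = \left(\frac{\mu_{i+1}\xi_2 - z_2}{\mu_i\xi_1 - z_1}\right)^{\!2},\qquad \frac{\mu_i - \mu}{\mu-\mu_{i+1}} = \frac{\xi_2^2}{\xi_1^2},
\]
so \eqref{e.muest} reduces to bounding the absolute slope of a chord from $(\mu_i\xi_1,\mu_{i+1}\xi_2)$ to a point in the disk of radius $\gamma\|r\|$ about the origin. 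This tangent-line problem has a closed-form solution; after simplification using the identity $(\mu_i-\mu)\xi_1^2 = (\mu-\mu_{i+1})\xi_2^2$ implied by $\mu(x)=\mu$, the extremal slope becomes $\sigma\,\xi_2/\xi_1$ with $\sigma = \gamma + (1-\gamma)\mu_{i+1}/\mu_i$, completing the proof.
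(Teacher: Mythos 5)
Your high-level strategy matches the paper's almost exactly: spectral transformation to $A=I$, characterization of the next iterate as ranging over a ball of radius $\gamma\|r\|$ about $Bx$, KKT analysis to reduce the worst case to a two-dimensional $B$-invariant subspace, and a planar computation to finish. But there is a genuine gap at precisely the point you flag as ``the main obstacle,'' and the secondary claims surrounding it are misstated.

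The key gap is the passage from the KKT stationarity conditions to ``support on at most two indices.'' Expanding the stationarity relations in the eigenbasis does not directly yield a system that forces support on two components. What one actually obtains, after eliminating $y$ from the $x$-stationarity using the $y$-stationarity, is a cubic polynomial relation of the form $p_3(B)y=0$, which a priori permits three positive roots $\mu_j$ (hence support on three eigencomponents). To rule out the third component one must establish sign constraints on the coefficients of $p_3$: specifically that the leading coefficient is $1-\gamma^2>0$ and the constant term is $cd\kappa\ge 0$, where $c$ and $d$ are combinations of the two Lagrange multipliers. Showing $cd\ge0$ is not a formal consequence of stationarity; it requires two auxiliary inner-product identities (in the paper, pairing \eqref{residual_formula} with $B^{-1}r$ and \eqref{bxy2} with $y-\mu(y)x$, both exploiting $(Bx-y,y)=0$ from the $y$-stationarity). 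Without an argument of this kind, ``a linear system whose nontrivial solutions are supported on at most two indices'' is an assertion, not a proof, and the reduction collapses.

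Two further points need repair. First, the monotonicity argument does not ``pin'' the two active indices to $\{i,i+1\}$: the minimizer generally lives in $\spanl\{x_k,x_l\}$ for some $\mu_k\ge\mu_i$ and $\mu_l\le\mu_{i+1}$, and only afterwards does one use monotonicity of the ratios in $\mu_k$ and $\mu_l$ to replace them by $\mu_i$ and $\mu_{i+1}$ (and the resulting $\sigma$ by $\gamma+(1-\gamma)\mu_{i+1}/\mu_i$). Your $2\times2$ display with $\operatorname{diag}(\mu_i,\mu_{i+1})$ therefore presupposes the very identification that must be justified. Second, in the two-dimensional tangent-line computation the extremal slope is not equal to $\sigma\,\xi_2/\xi_1$; one only gets the inequality $|\beta/\alpha|\le\sigma$, with equality attained only in the degenerate limit $\mu(x)\to\mu_i$ (see the paper's Appendix~I for the exact extremal value $\delta_+$, which is strictly smaller in general). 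Stating the conclusion as an equality of slopes both overstates what is true and obscures that \eqref{e.muest} is an inequality that is sharp only asymptotically in $\alpha\to 0$.
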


The first step, Lemma \ref{l.1}, of the proof of Theorem \ref{t.1} is the same as that in \cite{KNN2003,kn09},
where we characterize a set of possible next step iterates $x^\prime$ in \eqref{e.preeig}
varying the preconditioner $T$ constrained by assumption \eqref{e.bass}, aiming at 
eliminating the preconditioner $T$ from consideration. 
The only difference is that in \cite{KNN2003,kn09} we start with changing an original coordinate basis to an
$A$-orthogonal basis, which transforms $A$ into the identity $I$, resulting in a one-line proof of Lemma \ref{l.1}.
 Here, we choose to present a detailed proof of Lemma \ref{l.1}, for a general $A$, demonstrating that 
the transformation of $A$ into the identity $I$, made after Lemma \ref{l.1}, is well justified.

\begin{lemma}
  \label{l.1}
Let us denote $\kappa=\mu(x)$ and 
\[x'_A=A^{1/2}x',\, x_A=A^{1/2}x,\, B_A=A^{-1/2}BA^{-1/2},\, T_A=A^{1/2}TA^{1/2},\, r_A=B_A x_A-\kappa x_A,\]
 and define a closed ball
\[\bgx_A=\{y:\; \left(B_A x_A-y\right)^T \left(B_A x_A-y\right) \leq \gamma^2 \left(r_A\right)^T r_A\}\]  centered at $B_Ax_A$. 
Let $T$ satisfy \eqref{e.bass}, 
then for $x^\prime$ given by \eqref{e.preeig} it holds that
\[
\kappa x'_A = B_A x_A - (I-T_A) (B_A x_A - \kappa x_A) \in \bgx_A.
\]
\end{lemma}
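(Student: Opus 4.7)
The plan is to carry out two substitutions in the defining equation \eqref{e.preeig} and then read off the desired identity and bound. The only real work is bookkeeping with the $A^{1/2}$--change of variables, so I expect no serious obstacle.

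First I would left-multiply \eqref{e.preeig} by $A^{1/2}$ and clear the factor $1/\kappa$, obtaining
\begin{equation*}
\kappa\, x'_A = \kappa\, x_A + A^{1/2} T\bigl(Bx - \kappa A x\bigr).
\end{equation*}
Next I would rewrite the residual in the new variables: since $Bx = A^{1/2} B_A x_A$ and $\kappa A x = \kappa A^{1/2} x_A$, one has $Bx - \kappa A x = A^{1/2}(B_A x_A - \kappa x_A) = A^{1/2} r_A$. Substituting this in, the last term becomes $A^{1/2} T A^{1/2} r_A = T_A r_A$, so
\begin{equation*}
\kappa\, x'_A = \kappa\, x_A + T_A r_A.
\end{equation*}

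Then I would add and subtract $B_A x_A$ on the right and regroup, namely
\begin{equation*}
\kappa\, x'_A = B_A x_A - (B_A x_A - \kappa x_A) + T_A r_A = B_A x_A - (I - T_A) r_A,
\end{equation*}
which is exactly the first claim of the lemma since $r_A = B_A x_A - \kappa x_A$.

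Finally, to show $\kappa x'_A \in \bgx_A$, I would just take Euclidean norms in the identity $B_A x_A - \kappa x'_A = (I - T_A) r_A$ and apply assumption \eqref{e.bass}: since $I - T_A = I - A^{1/2} T A^{1/2}$ has largest singular value at most $\gamma$, we get
\begin{equation*}
\bigl(B_A x_A - \kappa x'_A\bigr)^T\bigl(B_A x_A - \kappa x'_A\bigr) \;\le\; s_{\max}(I - T_A)^2\, r_A^T r_A \;\le\; \gamma^2\, r_A^T r_A,
\end{equation*}
which is precisely the definition of $\bgx_A$. The main point to flag is the clean cancellation $A^{1/2} T A^{1/2} = T_A$ that makes the assumption \eqref{e.bass} directly applicable; without the change of variables to an $A$-based geometry this step would not be transparent.
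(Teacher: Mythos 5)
Your proposal is correct and follows essentially the same route as the paper: left-multiply \eqref{e.preeig} by $\mu(x)A^{1/2}$, rewrite in the $\cdot_A$ variables to obtain $\kappa x'_A = B_Ax_A - (I-T_A)r_A$, and conclude by taking Euclidean norms and invoking \eqref{e.bass}. The only cosmetic difference is that you factor the residual $Bx-\kappa Ax = A^{1/2}r_A$ before applying $T$, whereas the paper inserts $A^{1/2}A^{-1/2}$ around $B$ term by term; both yield the identical identity.
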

\begin{proof}
Left-multiplying \eqref{e.preeig} by $\mu(x) A^{1/2}$ gives
\[\mu(x)A^{1/2}x' = \mu(x) A^{1/2} x +A^{1/2}TA^{1/2}A^{-1/2}BA^{-1/2} A^{1/2}x
- \mu(x) A^{1/2}TA^{1/2} A^{1/2} x ,\]
or, in our new notation,
\[
\kappa x'_A = \kappa x_A +T_A B_A x_A - \kappa T_A x_A
= B_A x_A - (I-T_A) (B_A x_A - \kappa x_A),
\]
resulting $B_A x_A - \kappa x'_A =  (I-T_A) r_A$. 
Since $s_{\max}\left(I-T_A\right)\leq\gamma$ by \eqref{e.bass}, we get
$
\left(B_A x_A - \kappa x'_A\right)^T \left(B_A x_A - \kappa x'_A\right) = 
\left((I-T_A) r_A\right)^T \left((I-T_A) r_A\right)\leq 
\gamma^2  \left(r_A\right)^T r_A.
$
\end{proof}

The second step of the proof is traditional---reducing the generalized symmetric eigenvalue problem 
$Bx=\mu Ax$ to the standard eigenvalue problem for the symmetric positive definite matrix  $B_A=A^{-1/2}BA^{-1/2}$
by making the change of variables as hinted by Lemma \ref{l.1}. 
We use the standard inner product in  $\cdot_A$ variables, i.e.
\[(y_A,z_A)=y_A^Tz_A=\left(A^{1/2}y\right)^T\left(A^{1/2}z\right)=y^TAz,\]
and the corresponding vector norm  $\|y_A\|=(y_A,y_A)^{1/2}$, so, e.g.,\
\[
\kappa=\mu(x)=\frac{x^TBx}{x^TAx}=\frac{x_A^T B_A x_A}{x_A^T x_A}= 
\frac{\left(x_A, B_A x_A\right)}{\left(x_A, x_A\right)}=\mu_A(x_A).
\]
We later use $B_A$ and $B_A^{-1}$ -based scalar products and norms defined as follows, e.g.,\ 
\[
(y_A,z_A)_{B_A}=y_A^T B_A z_A =\left(A^{1/2}y\right)^TA^{-1/2}BA^{-1/2}\left(A^{1/2}z\right)=y^T Bz.
\]
For brevity we drop the subscript $\cdot_A$ in the rest of the paper.
In the following $T$ refers to $T_A=A^{1/2}TA^{1/2}$, $B$ refers to $B_A=A^{-1/2}BA^{-1/2}$, 
$x$ refers to $x_A=A^{1/2}x$, and so on, cf. Lemma \ref{l.1}.
Furthermore, $\mu(x)={(x,Bx)}/{(x,x)}$, and method \eqref{e.preeig} is
$\mu(x)x' = Bx-(I-T)(Bx-\mu(x)x)$. The new form of condition \eqref{e.bass}
is $\|I-T\|\leq\gamma$. This means that $A^{1/2}TA^{1/2}$ approximates the identity matrix
with respect to the notation used in Lemma \ref{l.1}.
The closed ball has the form $\bgx=\{y:\; \|Bx-y\|\leq \gamma{\|r\|}\}$ with the radius
$\gamma{\|r\|}$ centered at $Bx$.
Since $\mu(x)x'\in\bgx$ and $\mu(x')=\mu\big(\mu(x)x'\big)$,
we can estimate $\mu(x')$ by using a minimizer of $\mu(\cdot)$ in $\bgx$
(i.e. by considering the worst case).
We observe that, effectively, we set $A=I$ without loss of generality. 

\section{The special case with $\gamma{\,=\,}0\,$: the power method}\label{s.cc}
\label{s3}
The main idea of the geometrical approach of \cite{KNN2003,kn09},
which we also employ in this paper,
is that the convergence rate of iterations \eqref{e.preeig}
is slowest, in terms of the  Rayleigh quotient, if 
$x$ is a linear combination of
two eigenvectors,
which makes the further convergence analysis trivial.
A new proof of this fact actually
occupies a major part of our paper.
In order to illustrate how such a dramatic reduction in 
dimension becomes possible, in this section we 
apply our technique to
a simplified case $T=I$ corresponding to $\gamma=0$.
It is not difficult to see that under this assumption
\eqref{e.preeig} turns into one iteration $\mu(x)x'=Bx$ of the power method,
\if 0
Our proof of Theorem \ref{t.1} in the next section uses powerful tools,
which are uncommon in numerical linear algebra and thus may catch
an unprepared reader unguarded. The role of this section is to
serve as a gentle introduction to the main proof. Here we assume
$A=T=I$ so that $\gamma=0$ in order to simplify the analysis,
then relation \eqref{e.preeig} turns into the power method,
$\mu(x)x'=Bx$, 
\fi
and bound \eqref{e.muest} holds with  $\gamma=0$ and thus $\sigma={\mu_{i+1}}/{\mu_i}$.
Let us make a historic note that exactly this result has apparently first appeared in
\cite{KNY1986,KNY1987}.
  
The left-hand side of bound \eqref{e.muest} is monotone in $\mu\left(x'\right)=\mu(Bx)$.
One way
to find out at which $x$
the behavior of \eqref{e.preeig} is the worst
is to minimize
$f(x)=\mu(Bx)$ 
for all $x$ that satisfy $\mu(x)=\kappa$
for some fixed $\kappa \in (\mu_{i+1},\mu_i)$.
Slightly abusing the notation in the proof, 
we keep denoting by $x$ both the initial approximation  in \eqref{e.preeig} 
and the vector in the minimization problem. 

We notice that $\mu(x)=\kappa$
is equivalent to
$h(x)=\kappa(x,x)-(x,Bx)=0$.
Therefore,
at a stationary point we have, using Lagrangian multipliers, that  
\begin{equation}\label{classic_lagrangian}
\nabla f(x)+a \nabla h(x)=0,
\end{equation}
where $a$ is some constant.
This yields
\[
\frac{2 B ( B - \mu(B x) I) B x}{\|Bx\|^2}+ 2a(\kappa x-Bx)=0,
\]
which can be rewritten as
\begin{equation}
\label{cubic.eq.pm}
B^3 x - \mu(B x) B^2 x - c B x + c\kappa x = 0,
\end{equation}
%
where $c=\|Bx\|^2a$.
Since 
\[
\mu(Bx) = \frac{\big(Bx,B(Bx)\big)}{\big(Bx,Bx\big)}
\]
implies $(B^3 x - \mu(B x) B^2 x, x) = 0$, we obtain
\[\begin{split}
& c\|B x - \kappa x\|^2 =
(B^3 x - \mu(B x) B^2 x, B x - \kappa x) 
\\
=\ & (B^3 x - \mu(B x) B^2 x, B x - \mu(B x)x) =
\|B^2 x - \mu(B x) B x\|^2,
\end{split}\]
which shows that $c > 0$.
Thus, equation \eqref{cubic.eq.pm}
can be viewed as a polynomial equation $p_3(B)x = 0$, where $p_3(t)$ is a third degree polynomial 
with positive first and last coefficients, specifically $1$ and $c \kappa$, correspondingly.

Inserting $x=\sum_{i=1}^m v_i$, where
$v_i$ are the projections
of $x$ onto the eigenspaces $\mathcal{V}_i$, leads to
$\sum_{i=1}^m p_3(\mu_i)v_i = 0$. 
Since the eigenspaces are orthogonal to each other,
the products $p_3(\mu_i)v_i$ must be zero for each $i$.
Owing to the positiveness of the first and last coefficients,
the polynomial $p_3$ must have a non-positive root, 
and thus at most two positive roots,
i.e. $p_3(\mu_i)$ can be zero for some two indexes $i=k$  and $i=l$ at most, allowing 
the only possibly nonzero $v_k$ and $v_l$ from all projections $v_i$.
We conclude that $x$ is a linear combination
of at most two normalized eigenvectors $x_k$ and $x_l$, 
corresponding to distinct eigenvalues $\mu_k$ and $\mu_l$ of the matrix $B$.

We assume without loss of generality 
that $x=x_k+\alpha x_l$, then 
\[
\alpha^2=\frac{\mu_k-\mu(x)}{\mu(x)-\mu_l}=\tan^2\angle\left(x,x_k\right).\]
Similarly, since $Bx=\mu_k x_k + \alpha \mu_l x_l$, we obtain
\begin{equation}
\label{2D.eq.pm}
\tan^2\angle\left(Bx,x_k\right)=\frac{\mu_k-\mu(Bx)}{\mu(Bx)-\mu_l}=\frac{\mu_l^2}{\mu_k^2}\alpha^2
=\sigma^2 \frac{\mu_k - \mu(x)}{\mu(x) - \mu_l},
\quad\mbox{with}\quad
\sigma = \frac{\mu_l}{\mu_k}.
\end{equation}
Let $\mu_k > \mu_l$, then 
$\kappa\in(\mu_l,\mu_k)$ implies $\mu_l\leq\mu_{i+1}<\mu(x)=\kappa<\mu_i\leq\mu_k$.
By using monotonicity of the ratio of the quotients in $\mu_k$ and $\mu_l$
and the fact that the vector $x$ here corresponds to the worst-case scenario, i.e. 
minimizing $\mu\left(x'\right)=\mu(Bx)$ over all $x$ with the fixed value $\mu(x)=\kappa$, 
we obtain \eqref{e.muest} with $\gamma=0$. Since \eqref{2D.eq.pm} is an equality, we also prove that the upper 
bound in \eqref{e.muest} with $\gamma=0$ is sharp, turning into an equality if 
the initial approximation in \eqref{e.preeig} satisfies $x\in{\rm span}\left\{x_i,\,x_{i+1}\right\}$.

In the next section, we apply the described dimensionality reduction technique
to the general case.
We formulate the conditions that ``the worst case'' $x$ must satisfy,
which yield the generalization of equation \eqref{classic_lagrangian},
and rewrite this equation as a cubic equation $p_3(B)x = 0$.
We show that the first and last coefficients of this equation are positive,
which, as we have just seen, implies that $x$ is a linear combination of
two eigenvectors.
A simple two-dimensional analysis completes the proof of Theorem~\ref{t.1}.

\section{The general case with $\gamma\in[0,1)\,$: the preconditioned eigensolver \eqref{e.preeig}}
\label{s.geo} \label{s4}
Next the proof of Theorem \ref{t.1} is given:
Let us denote $r=Bx-\mu(x)x$ and define 
$\bgx=\{y:\; \|Bx-y\|\leq \gamma{\|r\|}\}$, a closed ball with the radius
$\gamma{\|r\|}$ centered at $Bx$.
On the one hand, it holds that $\mu(y)>\mu(x)$ for any vector $y\in\bgx$ since $x$ is not an eigenvector and
$r\neq0$. Indeed, taking into account 
$\|Bx-y\|^2<\|r\|^2=\|Bx-\mu(x)x\|^2$,
we have
\[\begin{split}\|y\|^2
&<2(x,y)_B-2\mu(x)\|x\|_B^2+\mu(x)^2\|x\|^2=2(x,y)_B-\mu(x)\|x\|_B^2\\
&=\big(\|y\|_B^2-\|y-\mu(x)x\|_B^2\big)/\mu(x)\le\|y\|_B^2/\mu(x)\\
\end{split}\]
so that $\mu(x)<\|y\|_B^2/\|y\|^2=\mu(y)$.
On the other hand, $\mu(x)x'=Bx-(I-T)r\in\bgx$,
since $\|(I-T)r\|\leq\gamma\|r\|$, see Lemma \ref{l.1}.
This proves $\mu(x')>\mu(x)>\mu_{i+1}$ and, thus, the left inequality in \eqref{e.muest}, provided that $\mu(x')<\mu_i.$

In the previous proof with $\gamma=0$, the ball $\bgx$ shrinks to a single point $Bx$ and the only choice of $y=Bx$ is possible. 
The present case  $\gamma>0$ is significantly more difficult for the worst-case scenario analysis, involving a minimization problem 
with two variables, $x$ and $y$. In our previous work, see \cite{KNN2003,kn09} and references therein, 
we first vary $y\in\bgx$ intending to minimize $\mu(y)$ for a given $x$, and then vary $x$ fixing $\mu(x)=\kappa$.
The first minimization problem defines $y$ as an implicit function of $x$, and then  
Lagrangian multipliers are used, as in Section \ref{s3}, to analyze the second minimization problem, in $x$. 
It turns out that the proof is much simpler if we vary both $x$ and $y$ at the same time
and attack the required two-parameter minimization problem in $x$ and $y$ directly by using the KKT arguments as provided below.

\begin{lemma}\label{t.reduc}
For $\gamma\in[0,1)$ and a fixed value $\kappa$
that is not an eigenvalue of $B$, let a pair of vectors $\{x^*,y^*\}$
denote a solution of the following constrained minimization problem:
\[\text{minimize}\ \ \mu(y)\ \
\text{subject to} \ \ \|Bx-y\| \le \gamma \|Bx-\kappa x\|\ \ \text{and} 
                 \ \ \mu(x)=\kappa.\]
If $x^*$ is not an eigenvector of $B$,
then both $x^*$ and $y^*$ belong to a two-dimensional
invariant subspace of $B$ corresponding to two distinct eigenvalues, and
\begin{equation}\label{sineq}
\sin\angle(Bx^*,y^*)=\gamma\sin\angle(Bx^*,x^*),
\end{equation}
where $\angle(\cdot,\cdot)$ denotes an angle between two vectors
defined by $\angle(u,v):=\arccos\left(\dfrac{(u,v)}{\|u\|\|v\|}\right)$.
\end{lemma}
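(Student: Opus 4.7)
The plan is to attack the joint minimization in $(x,y)$ via KKT conditions, convert the stationarity system into a single cubic polynomial identity $p_3(B)x^* = 0$, and then mimic the reasoning of Section~\ref{s.cc} to force $x^*$ into a two-dimensional invariant subspace of $B$. Concretely, I would introduce a multiplier $\lambda\ge 0$ for the inequality constraint $g(x,y) := \|Bx-y\|^2-\gamma^2\|Bx-\kappa x\|^2\le 0$ and a multiplier $\nu\in\mathbb{R}$ for the equality $h(x) := (x,Bx)-\kappa(x,x)=0$. Writing $\mu^* := \mu(y^*)$, $u := Bx^*-y^*$, $r^* := Bx^*-\kappa x^*$, and $c := \lambda\|y^*\|^2$, stationarity in $y$ reads $By^*-\mu^* y^* = c\,u$, while stationarity in $x$ reads $\lambda Bu = \bigl[\lambda\gamma^2(B-\kappa I)-\nu I\bigr]r^*$. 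The degenerate case $\lambda=0$ collapses $y^*$ to an eigenvector and can be handled separately; in the generic case $\lambda>0$, complementary slackness also delivers $\|u\|=\gamma\|r^*\|$.

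Before the subspace reduction I would cash in the sine identity (\ref{sineq}) cheaply from the $y$-stationarity alone. Since $\mu$ is $0$-homogeneous, $By^*-\mu^* y^*$ is automatically orthogonal to $y^*$, so the relation $By^*-\mu^* y^*=c\,u$ with $c\ne 0$ forces $(u,y^*)=0$; geometrically, the segment from the origin to $y^*$ meets the sphere $\partial\bgx$ at right angles to the radius, i.e.\ tangentially. Pythagoras in the right triangle with vertices $0,\,Bx^*,\,y^*$ yields $\|Bx^*\|^2=\|y^*\|^2+\|u\|^2=\|y^*\|^2+\gamma^2\|r^*\|^2$, so $\sin\angle(Bx^*,y^*)=\|u\|/\|Bx^*\|=\gamma\|r^*\|/\|Bx^*\|$. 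Combining this with $\|r^*\|=\|Bx^*\|\sin\angle(Bx^*,x^*)$, which holds because $(r^*,x^*)=0$ for any Rayleigh pair, gives (\ref{sineq}).

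For the two-dimensional assertion I would parallel the power-method proof of Section~\ref{s.cc}. Substituting $y^*=Bx^*-u$ into the $y$-stationarity and using that polynomials in $B$ commute produces $(B-(\mu^*-c)I)\,u=(B-\mu^* I)Bx^*$. Multiplying the $x$-stationarity on the left by $(B-(\mu^*-c)I)$ and substituting yields the polynomial identity
\[
\lambda B^2(B-\mu^* I)\,x^*=(B-(\mu^*-c)I)\bigl[\lambda\gamma^2(B-\kappa I)-\nu I\bigr](B-\kappa I)\,x^*,
\]
i.e.\ $p_3(B)x^*=0$ with an explicit cubic $p_3(t)$. Its leading coefficient is $\lambda(1-\gamma^2)>0$ (this is where $\gamma<1$ enters), and expanding shows $p_3(0)=\kappa(\mu^*-c)(\lambda\gamma^2\kappa+\nu)$. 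Once this constant term is shown to be strictly positive, the product of the three roots of $p_3$ is negative, so at most two of them are positive; expanding $x^*=\sum v_i$ in the eigenbasis of $B$ then confines the support of $x^*$ to at most two eigenspaces, and the identity $y^*=Bx^*-u$ with $u$ a rational function of $B$ applied to $Bx^*$ places $y^*$ in the same two-dimensional invariant subspace.

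The main obstacle is establishing the sign $(\mu^*-c)(\lambda\gamma^2\kappa+\nu)>0$. Nothing in the first-order KKT equations delivers it for free, so I expect to extract it by pairing those equations with $x^*$, $y^*$, $u$, and $r^*$ and exploiting the information already in hand: $\mu^*>\kappa>0$ (from the fact, proved just before Lemma~\ref{t.reduc}, that every $y\in\bgx$ satisfies $\mu(y)>\mu(x)$), the tangency $(Bx^*,y^*)=\|y^*\|^2$, and the Pythagorean identity $\|y^*\|^2=\|Bx^*\|^2-\gamma^2\|r^*\|^2$. If these first-order identities are not quite enough, invoking the second-order sufficient conditions at the minimizer $(x^*,y^*)$ should pin down the sign. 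The remaining edge cases, namely $\lambda=0$ and the situation where $p_3$ admits a double positive root that would collapse $x^*$ to a genuine eigenvector, are ruled out by the hypothesis that $x^*$ is not an eigenvector of $B$, completing the reduction.
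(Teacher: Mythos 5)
Your setup mirrors the paper's: the same KKT framework for the joint constrained problem in $(x,y)$, the same cubic polynomial identity with leading coefficient proportional to $1-\gamma^2>0$, and the same reduction to a two-dimensional invariant subspace once the constant coefficient has the right sign. Your derivation of \eqref{sineq} from the tangency $(Bx^*-y^*,y^*)=0$ and Pythagoras is correct and coincides with the paper's (the paper extracts $(Bx-y,y)=0$ by pairing \eqref{partialy} with $y$, then passes to cosines). So the skeleton is right.

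However, you explicitly leave open the single step on which the whole reduction hinges: showing that $p_3(0)=\kappa(\mu^*-c)(\lambda\gamma^2\kappa+\nu)$ is nonnegative. You write ``nothing in the first-order KKT equations delivers it for free'' and propose to hunt for it by pairing with $x^*,y^*,u,r^*$, or to fall back on second-order conditions. This is the genuine gap. Translating your multipliers to the paper's ($\lambda=a$, $\nu=-b$, $c_{\rm student}=a\|y^*\|^2=d+\mu(y)$ and $\lambda\gamma^2\kappa+\nu=-ac_{\rm paper}$), your constant term becomes $a\kappa\, c_{\rm paper}\,d$, so what you need is exactly $cd\ge 0$ in the paper's notation. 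The paper obtains it with two first-order pairings you did not identify: pairing the $x$-stationarity relation \eqref{residual_formula} with $B^{-1}r^*$ gives $c\|r^*\|_{B^{-1}}^2=-\kappa(Bx^*-y^*,x^*)$, while pairing the rewritten $y$-stationarity \eqref{bxy2} with $y^*-\mu(y^*)x^*$ gives $0\le\|y^*-\mu(y^*)x^*\|_B^2=-d\,\mu(y^*)(Bx^*-y^*,x^*)$; combining the two forces $cd\ge0$. No second-order information is needed. Without this argument (or an equivalent one) your proof is incomplete at its crux.

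A secondary weakness is the direction of the two-dimensional confinement. You first confine $x^*$ and then argue $y^*=Bx^*-u$ follows because ``$u$ is a rational function of $B$ applied to $Bx^*$''; this requires $B-(\mu^*-c)I$ to be invertible, i.e.\ that $\mu^*-c=-d$ is not an eigenvalue of $B$, which you have not ensured. The paper sidesteps this by deriving the cubic $p_3(B)y^*=0$ first and then recovering $x^*$ from $(d+\mu(y^*))Bx^*=(B+dI)y^*$, which only uses invertibility of $B$ (guaranteed since $B>0$) and $d+\mu(y^*)=a\|y^*\|^2\neq0$. You should reverse the order of the reduction, or separately rule out the resonance $\mu^*-c\in\mathrm{spec}(B)$.

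Finally, you aim for strict positivity of $p_3(0)$; the weaker $p_3(0)\ge0$ already suffices (a zero constant term still leaves at most two positive roots), which is precisely what the paper proves.
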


\begin{proof}
We consider the equivalent problem
\begin{align*}
\text{minimize}   & \ \ f(x,y)=\mu(y),\, x\neq0, \\
\text{subject to} & \ \ g(x,y)=\|Bx-y\|^2 -\gamma^2 \|Bx-\kappa x\|^2\leq 0, \text{ and} \\
                  & \ \ h(x,y)=\kappa(x,x)-(x,Bx)=0.
\end{align*}
We first notice that the assumption $x\neq0$ implies $y\neq0$ because of the first constraint and
$\gamma<1$. Thus, $\mu(y)$ is correctly defined. Next, let us temporarily
consider a stricter constraint  $\|x\|=1$, instead of $x\neq0$. Combined with the other
constraints, this results in minimization of the smooth function $f(x,y)$ on a compact set, so there
exists a solution $\{x^*,y^*\}$. Finally, let us remove the artificial constraint  $\|x\|=1$ and
notice that any nonzero multiple of $\{x^*,y^*\}$ is also a solution. Thus we can consider the
Karush-Kuhn-Tucker (KKT) conditions, e.g., \cite[Theorem 9.1.1]{fletcher}, \cite{fletcher,NOW2006},
in any neighborhood of  $\{x^*,y^*\}$, which does not include the origin.

Next we show that the gradients of $g$ and $h$ are linearly independent. 
For the gradient of $h$, it holds that $\partial h/\partial x=-2r$ with $r\neq0$,
since $x$ is not an eigenvector of $B$, and it holds that $\partial h/\partial y=0$,
since $h$ does not depend on $y$.
Assuming the linear dependence of the gradients of $g$ and $h$ implies that
$\partial g/\partial y=0$, so that $2(y-Bx)=0$ and $y=Bx$.
By using $y=Bx$, it holds that
\[
\frac{\partial g}{\partial x}=2\left(B^2x-By-\gamma^2(B-\kappa I)r\right) = -2 \gamma^2(B-\kappa I)r, 
\]
while $\partial h/\partial x=-2r$, i.e. 
(using again the assumed linear dependence)
the vector $r=Bx-\kappa x$ is an eigenvector of $B-\kappa I$, and,
hence $x$ is an eigenvector of $B$, contradicting the lemma assumption. 

Therefore, the gradients of $g$ and $h$ are linearly independent.
All functions involved in our constrained minimization are smooth.
We conclude that the stationary point $\{x^*,y^*\}$ is regular, i.e.,\ the KKT conditions are valid.
The KKT stationarity condition states that there exist constants
$a$ and $b$ such that
\[
\nabla f(x^*,y^*)+a \nabla g(x^*,y^*)+b \nabla h(x^*,y^*)=0
\]
at the critical point  $\{x^*,y^*\}$.
The independent variables  $\{x,y\}$ no longer appear, so
to simplify the notation, in the rest of the proof we
drop the superscript $^*$ and substitute  $\{x,y\}$ for  $\{x^*,y^*\}$.
We separately write the partial derivatives with respect to $x$,
\begin{equation}\label{partialx}
2a \left(B^2x-By-\gamma^2(B-\kappa I)r \right) - 2br=0,
\end{equation}
and with respect to $y$,
\begin{equation}\label{partialy}
2\frac{By-\mu(y)y}{(y,y)}+2a(y-Bx)=0.
\end{equation}
The KKT complementary slackness condition
$a g(x,y)=0$ must be satisfied, implying 
\begin{equation}\label{cslackness}
\|Bx-y\| =\gamma \|r\| \text{ if } a\neq0. 
\end{equation}

If $y$ is an eigenvector then $By-\mu(y)y=0$ in condition \eqref{partialy}, leading to $y=Bx$, i.e. vector $x$ is also an eigenvector of $B$, thus we are done.  Now we consider a nontrivial case, where neither $x$ nor $y$ is an eigenvector.
Condition \eqref{partialy} then implies $a\neq0$, so identity \eqref{cslackness} holds unconditionally, 
condition \eqref{partialx} turns into
\begin{equation}\label{residual_formula}
B(Bx-\gamma^2r-y)=c r \qquad \text{ with } c=\frac{b}{a} - \gamma^2\kappa,
\end{equation} 
and  taking the inner product of \eqref{partialy} with $y$ gives
\begin{equation}\label{Bxmyy}
 (Bx-y,y)=0.
\end{equation} 

Taking the inner products of both sides of \eqref{residual_formula}
with $B^{-1}r$ results in
\begin{align*}
c\|r\|_{B^{-1}}^2 =
 (B x - y, r)-\gamma^2\|r\|^2 =
 (B x - y, r)-\|B x - y\|^2 =
 -\kappa(B x - y, x).
\end{align*}
Therein we use \eqref{cslackness} and \eqref{Bxmyy}. 

Denoting $d=a\|y\|^2-\mu(y)$,
we rewrite \eqref{partialy} as
\begin{equation}\label{bxy2}
By-\mu(y)Bx=d(Bx-y).
\end{equation}
Taking the inner products of both sides of \eqref{bxy2} with $y-\mu(y)x$ 
yields
\begin{eqnarray*}
0\leq \|y - \mu(y) x\|_B^2 = \
d (B x - y, y - \mu(y) x) =
-d \mu(y) (B x - y, x),
\end{eqnarray*}
where the orthogonality $(Bx-y,y)=0$ 
has been used again.
Therefore, we obtain 
$cd\|r\|_{B^{-1}}^2=-d\kappa(Bx-y,x)\geq0$,
which implies $cd\geq0$.

Substituting $r=Bx-\kappa x$ and multiplying through by $B$
in \eqref{residual_formula} results in
\[
(1-\gamma^2) B^3 x+(\kappa\gamma^2-c)B^2 x-B^2 y+c\kappa Bx=0.
\]
Multiplying through by $d+\mu(y)$ and substituting $(d+\mu(y))Bx=(B+ d I)y$,
which follows from \eqref{bxy2}, we obtain
$p_3(B)y=(c_3B^3+c_2B^2+c_1B+c_0)y=0$, where $p_3(\cdot)$ is a third degree polynomial
with $c_3=1-\gamma^2>0$ and $c_0=cd\kappa\geq0$,
which cannot have more than two positive roots.
\if 0
Inserting the expansion $y=\sum_{i=1}^mv_i$
with the projections $v_i$ of $y$ on the eigenspaces $\mathcal{V}_i$ leads to
$p_3(\mu_j)\|v_j\|^2=0$ for every $j$, cf. section 3.
Since $c_3 > 0$ and $c_0 \geq 0$, the polynomial $p_3$ must have a non-positive root,
and thus at most two positive roots. Hence
$\|v_j\|^2$ can be nonzero for at most two $j$ (since $B>0$) and 
\fi
Thus, $y$ is a linear combination
of two normalized eigenvectors $x_k$ and $x_l$ 
corresponding to two distinct eigenvalues
(cf. Section~3),
i.e.
$y\in \mathcal Z:=\mathrm{span}\{x_k,x_l\}$. 
Since
$d+\mu(y)=a\|y\|^2\neq0$, by \eqref{bxy2} so is $x$.

Furthermore, the orthogonality $(Bx-y,y)=0$ from \eqref{Bxmyy}
shows that
\[\cos^2\angle(Bx,y)=\frac{(Bx,y)^2}{\|Bx\|^2\|y\|^2}
=\frac{(y,y)^2}{\|Bx\|^2\|y\|^2}=\frac{\|y\|^2}{\|Bx\|^2},\]
and $\sin^2\angle(Bx,y)=1-\cos^2\angle(Bx,y)
=(\|Bx\|^2-\|y\|^2)/\|Bx\|^2=\|Bx-y\|^2/\|Bx\|^2$.
This leads to $\sin\angle(Bx,y)=\|Bx-y\|/\|Bx\|$,
since the angles between vectors have the range $[0,\pi]$
(due to $\arccos$).
Similarly, $(Bx-\kappa x,x)=0$ together with $\kappa>0$ implies
$\sin\angle(Bx,x)=\sin\angle(Bx,\kappa x)=\|Bx-\kappa x\|/\|Bx\|$.
Then we have $\sin\angle(Bx,y)=\gamma\sin\angle(Bx,x)$
by using \eqref{cslackness}.
\end{proof}

We now derive bound \eqref{e.muest} in a two-dimensional $B$-invariant subspace.
\begin{lemma}\label{t.2D}
Let $x^*$ and $y^*$ belong to a two-dimensional
invariant subspace of $B$ corresponding to the eigenvalues
$\mu_k>\mu_l$ and satisfy \eqref{sineq}, where $x^*$ is not an eigenvector.
It holds that
\begin{align}
   \label{e.2Dest}
   \frac{\mu_k-\mu(y^*)}{\mu(y^*)-\mu_l}\;
   \frac{\mu(x^*)-\mu_l}{\mu_k-\mu(x^*)}
   \le\left(\gamma+(1-\gamma)\frac{\mu_l}{\mu_k}\right)^2.
\end{align}
\end{lemma}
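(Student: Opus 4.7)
The plan is to push everything into the orthonormal basis $\{x_k, x_l\}$ of the two-dimensional invariant subspace and rewrite \eqref{e.2Dest} as a single scalar inequality. Writing $z = z_k x_k + z_l x_l$ in this subspace gives directly
\[
\frac{\mu_k - \mu(z)}{\mu(z) - \mu_l} \;=\; \frac{z_l^2}{z_k^2},
\]
so, setting $t_x = x^*_l/x^*_k$ and $t_y = y^*_l/y^*_k$, the conclusion is exactly $t_y^2 \leq \sigma^2 t_x^2$. The denominator $x^*_k$ is nonzero because $x^*$ is not a $\mu_l$-eigenvector, and I may assume $t_x > 0$ by flipping the sign of $x_l$ if necessary.

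Since $Bx^* = \mu_k x^*_k x_k + \mu_l x^*_l x_l$, the vector $Bx^*$ makes an angle with the $x_k$-axis of tangent $\rho t_x$, where $\rho := \mu_l/\mu_k \in (0,1)$. Using the identity $\sin(\theta-\phi) = \cos\theta\cos\phi\,(\tan\theta - \tan\phi)$ inside \eqref{sineq} and squaring converts the hypothesis into
\[
g(t_y) \;:=\; (t_y - \rho t_x)^2 (1 + t_x^2) - \gamma^2 (1-\rho)^2 t_x^2 (1 + t_y^2) \;=\; 0.
\]
This is a quadratic in $t_y$ whose leading coefficient $(1+t_x^2) - \gamma^2(1-\rho)^2 t_x^2$ is strictly positive (because $\gamma(1-\rho) < 1$), so $g$ opens upward, has two finite real roots, and in particular $t_y$ is well-defined.

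It remains to show that both roots of $g$ lie in $[-\sigma t_x, \sigma t_x]$, where $\sigma = \rho + \gamma(1-\rho) \in [\rho,1)$. I will do this by the intermediate-value theorem: a short direct computation using $\sigma - \rho = \gamma(1-\rho)$ gives
\[
g(\sigma t_x) \;=\; \gamma^2(1-\rho)^2 t_x^4 (1-\sigma^2) \;\geq\; 0, \qquad g(-\sigma t_x) \;=\; g(\sigma t_x) + 4\rho\sigma t_x^2 (1+t_x^2) \;\geq\; 0,
\]
while $g(\rho t_x) = -\gamma^2(1-\rho)^2 t_x^2 (1 + \rho^2 t_x^2) \leq 0$. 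Since $-\sigma t_x \leq \rho t_x \leq \sigma t_x$, an upward-opening quadratic that is non-positive at the interior point $\rho t_x$ and non-negative at the two endpoints $\pm\sigma t_x$ must have both of its roots in the interval $[-\sigma t_x, \sigma t_x]$. Hence $t_y^2 \leq \sigma^2 t_x^2$, which is \eqref{e.2Dest}.

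The main conceptual obstacle I expect is that \eqref{sineq} constrains only the absolute angle between $Bx^*$ and $y^*$, and in two dimensions this permits $y^*$ on either of two rays relative to $Bx^*$; it is tempting to pick a worst ray a priori, but the quadratic $g$ captures both rays uniformly and the three-point evaluation controls both of its roots at once. A secondary subtlety is ruling out degeneracies (that $y^*$ is not parallel to $x_l$, so that no denominator in the reduction vanishes); this is immediate because the leading coefficient of $g$ is nonzero, forcing $t_y$ to be finite.
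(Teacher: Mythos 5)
Your proof is correct, and the setup coincides with the paper's: both pass to the orthonormal eigenbasis $\{x_k,x_l\}$, introduce the slopes $t_x=\alpha$, $t_y=\beta$, and reduce \eqref{sineq} (by the sine-difference identity in your case, by cross products in the paper's) to the same algebraic relation
\[
(t_y-\rho t_x)^2(1+t_x^2)=\gamma^2(1-\rho)^2 t_x^2(1+t_y^2),\qquad \rho=\mu_l/\mu_k.
\]
Where you genuinely diverge is the finishing step. The paper multiplies the fraction $\frac{1+\alpha^2}{1+\beta^2}$ out, observes it equals $\frac{\mu(y)-\mu_l}{\mu(x)-\mu_l}\ge 1$ (a fact that is \emph{not} in the lemma's hypotheses but is supplied by the surrounding argument that the minimizer satisfies $\mu(y^*)\ge\mu(x^*)$), drops it, and then applies the triangle inequality to $|\beta/\alpha-\rho|\le\gamma(1-\rho)$. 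You instead treat the relation as a fixed upward-opening quadratic $g(t_y)=0$ and show both roots lie in $[-\sigma t_x,\sigma t_x]$ by evaluating $g$ at $\rho t_x$ and $\pm\sigma t_x$ and invoking convexity. Your route is more self-contained: it does not require the auxiliary fact $\mu(y^*)\ge\mu(x^*)$ from outside the lemma --- indeed it re-derives it, since $|t_y|\le\sigma t_x< t_x$ implies $\mu(y^*)>\mu(x^*)$. It is also closer in spirit to the paper's Appendix~I (which works with the quadratic in $\delta=\beta/\alpha$) but avoids solving the quadratic explicitly. The paper's main proof is shorter once the auxiliary monotonicity fact is granted; yours makes Lemma \ref{t.2D} a fully freestanding statement. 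Both are correct, and your three-point convexity trick is a nice, uniform way to handle both sign choices for $y^*$ relative to $Bx^*$ at once, which is precisely the subtlety you flag.
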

\begin{proof}
In this proof we drop the superscript $^*$ upon
$x$ and $y$.
The vectors $x$, $y$ and $Bx$ can be represented by the coefficient vectors
\[u:=c_1(1,\alpha)^T,\quad v:=c_2(1,\beta)^T \text{ \;and \;} w:=c_1(\mu_k,\alpha\mu_l)^T\]
with respect to an orthonormal basis $\{x_k,x_l\}$,
where $x_k$ and $x_l$ are eigenvectors associated with $\mu_k$ and $\mu_l$.
Evidently, it holds that $(Bx,y)=(w,v)$, $\|Bx\|=\|w\|$, $\|y\|=\|v\|$
by using the orthonormal basis. Therefore, $\angle(Bx,y)=\angle(w,v)$,
and similarly $\angle(Bx,x)=\angle(w,u)$.
This allows us to rewrite \eqref{sineq}
in the form $\,\sin\angle(w,v)=\gamma\sin\angle(w,u)$. 
Using the geometric property of the cross products
\[\tilde{v}:=\begin{bmatrix}w\\0\end{bmatrix}\times\begin{bmatrix}v\\0\end{bmatrix}
\quad\mbox{and}\quad
\tilde{u}:=\begin{bmatrix}w\\0\end{bmatrix}\times\begin{bmatrix}u\\0\end{bmatrix},\]
we have
\[\frac{\|\tilde{v}\|}{\|w\|\,\|v\|}=\gamma\,\frac{\|\tilde{u}\|}{\|w\|\,\|u\|},\]
which yields
\[\gamma^2=\frac{\|\tilde{v}\|^2\|u\|^2}{\|\tilde{u}\|^2\|v\|^2}
=\frac{(\beta\mu_k-\alpha\mu_l)^2(1+\alpha^2)}{(\alpha\mu_k-\alpha\mu_l)^2(1+\beta^2)}.\]
Further, we use the equalities
\[\alpha^2=\tan^2\angle(x,x_k)=\frac{\mu_k-\mu(x)}{\mu(x)-\mu_l},\quad
\beta^2=\tan^2\angle(y,x_k)=\frac{\mu_k-\mu(y)}{\mu(y)-\mu_l},
\]
which can be derived in a similar way to Section \ref{s.cc}.
Then $\dfrac{1+\alpha^2}{1+\beta^2}=\dfrac{\mu(y)-\mu_l}{\mu(x)-\mu_l} \ge 1$,
so that
\[\gamma^2\ge\frac{(\beta\mu_k-\alpha\mu_l)^2}{(\alpha\mu_k-\alpha\mu_l)^2},
\quad\mbox{and}\quad\left|\frac{\beta}{\alpha}-\frac{\mu_l}{\mu_k}\right|
\le\gamma\left|1-\frac{\mu_l}{\mu_k}\right|.\]
Since $0<\mu_l/\mu_k<1$, we have
\[\left|\frac{\beta}{\alpha}\right|
\le\left|\frac{\beta}{\alpha}-\frac{\mu_l}{\mu_k}\right|+\left|\frac{\mu_l}{\mu_k}\right|
\le\gamma\left(1-\frac{\mu_l}{\mu_k}\right)+\frac{\mu_l}{\mu_k}
=\gamma+(1-\gamma)\frac{\mu_l}{\mu_k},\]
which proves \eqref{e.2Dest} by using
$\dfrac{\mu_k-\mu(y)}{\mu(y)-\mu_l}\;\dfrac{\mu(x)-\mu_l}{\mu_k-\mu(x)}
=\left|\dfrac{\beta}{\alpha}\right|^2$.
\end{proof}

The proof of Theorem \ref{t.1} is completed by deriving 
convergence bound \eqref{e.muest} from its two-dimensional version.
We restate the assumption $\mu_{i+1}<\mu(x)<\mu(x')<\mu_i$.
According to Lemma \ref{t.reduc}, there exists a minimizer $y$ with $\mu(y)\le\mu(x')$, which
satisfies \eqref{e.2Dest} with $\mu_l<\mu(x)<\mu(y)<\mu_k$, and the
interval $(\mu_{i+1},\mu_i)$ is a subset of $(\mu_l,\mu_k)$. 
Using monotonicity arguments,  \eqref{e.2Dest}, 
and the same arguments as Section \ref{s.cc}, we obtain 
\[\begin{split}
\frac{\mu_i-\mu(y)}{\mu(y)-\mu_{i+1}}\;
&  \frac{\mu(x)-\mu_{i+1}}{\mu_i-\mu(x)}
=\frac{\mu_i-\mu(y)}{\mu_i-\mu(x)}\;
   \frac{\mu(x)-\mu_{i+1}}{\mu(y)-\mu_{i+1}}
\le\frac{\mu_k-\mu(y)}{\mu_k-\mu(x)}\;
   \frac{\mu(x)-\mu_l}{\mu(y)-\mu_l}\\[1ex]
&=\frac{\mu_k-\mu(y)}{\mu(y)-\mu_l}\;
   \frac{\mu(x)-\mu_l}{\mu_k-\mu(x)}
   \le\left(\gamma+(1-\gamma)\frac{\mu_l}{\mu_k}\right)^2
\le\left(\gamma+(1-\gamma)\frac{\mu_{i+1}}{\mu_i}\right)^2.
\end{split}\]
This proves \eqref{e.muest} since
$(\mu_i-\mu(x'))/(\mu(x')-\mu_{i+1})\le(\mu_i-\mu(y))/(\mu(y)-\mu_{i+1})$.

\section*{Conclusions}
\label{secC}
We have presented a succinct
proof of the standard sharp convergence rate bound
for the simplest fixed step size preconditioned eigensolver. 
The key argument of the new proof is 
the characterization of the case of poorest convergence
as a constrained optimization problem for the Rayleigh quotient.
Employing the
Karush-Kuhn-Tucker conditions
and some elementary matrix algebra, we
dramatically simplify
the convergence analysis 
by reducing it to
a 
subspace spanned by two eigenvectors.
We expect the 
analytical framework 
developed
in this paper to be a valuable tool 
in the convergence analysis of
a variety of preconditioned eigensolvers;
see, e.g., the analysis of the preconditioned steepest descent method
in \cite{N2012}.

\section*{Appendix}
\textbf{I}. An alternative estimate for the left-hand side of \eqref{e.2Dest},
which is sharp with respect to all variables, can be derived as follows:
With $\delta:=\beta/\alpha$ and $\varepsilon:=\mu_l/\mu_k$,
a new representation of $\gamma^2$ is given by
\[\gamma^2
=\frac{(\delta-\varepsilon)^2(1+\alpha^2)}{(1-\varepsilon)^2(1+\alpha^2\delta^2)}.\]
This results in a quadratic equation for $\delta$ with the roots
\[\delta_{\pm}=\frac{\varepsilon(1+\alpha^2)\pm\gamma(1-\varepsilon)
\sqrt{(1+\alpha^2)(1+\alpha^2\varepsilon^2)-\alpha^2\gamma^2(1-\varepsilon)^2}}
{(1+\alpha^2)-\alpha^2\gamma^2(1-\varepsilon)^2}.\]
Since $\delta^2=\dfrac{\beta^2}{\alpha^2}=\dfrac{\mu_k-\mu(y)}{\mu(y)-\mu_l}\;
   \dfrac{\mu(x)-\mu_l}{\mu_k-\mu(x)}$,
a strictly sharp bound for the estimate in \eqref{e.2Dest}
is given by $\max\{\delta_{+}^2,\delta_{-}^2\}=\delta_{+}^2$.
We note that in the limit case $\mu(x)\to\mu_k$ it holds that $\alpha\to0$,
and $\delta_{+}^2$ turns into $(\varepsilon+\gamma(1-\varepsilon))^2$.  This
coincides with the known bound in \eqref{e.2Dest}.\\
\textbf{II}.
The bound in \eqref{e.2Dest} contains a convex combination of $1$ and $\mu_l/\mu_k$.
Interestingly, this bound can also be derived by using a convex function as follows:
Without loss of generality, we assume that $x$ has a positive $x_k$ coordinate.
Then $\angle(x,x_k)$ is an acute angle. Since $B>0$, $\angle(Bx,x)$ and $\angle(Bx,x_k)$
are also acute angles. The equality \eqref{sineq}
together with $\gamma<1$ shows further $\angle(Bx,y)<\angle(Bx,x)<\pi/2$.
Since $\angle(Bx,x_k)$ and $\angle(Bx,y)$ are acute angles,
the vectors $x_k$ and $y$ are located in a half-plane
whose boundary line is orthogonal to $Bx$.
A simple case differentiation shows
that $\angle(y,x_k)$ is either equal to $|\angle(Bx,x_k)-\angle(Bx,y)|$
or equal to $\angle(Bx,x_k)+\angle(Bx,y)$.
Further, we use the equalities
\[\tan^2\angle(y,x_k)=\frac{\mu_k-\mu(y)}{\mu(y)-\mu_l},\;\;
\tan^2\angle(x,x_k)=\frac{\mu_k-\mu(x)}{\mu(x)-\mu_l},\;\;
\frac{\tan^2\angle(Bx,x_k)}{\tan^2\angle(x,x_k)}=\frac{\mu_l^2}{\mu_k^2},
\]
which can be derived in a similar way to Section \ref{s.cc}.
The last equality proves $\angle(Bx,x_k)<\angle(x,x_k)$,
since the tangent is an increasing function for acute angles, and $\mu_l<\mu_k$.
This leads to $\angle(x,x_k)=\angle(Bx,x_k)+\angle(Bx,x)$,
since $x, Bx, x_k$ are all in the same quadrant.
In summary, it holds that 
\begin{equation}\label{e.2Dest1a}
\angle(y,x_k)\le\angle(Bx,x_k)+\angle(Bx,y)<\angle(Bx,x_k)+\angle(Bx,x)
=\angle(x,x_k)<\pi/2,
\end{equation}
i.e., $\angle(y,x_k)$ is a further acute angle.
Using these acute angles, we write \eqref{e.2Dest} equivalently as
\begin{equation}\label{e.2Dest1}
\tan\angle(y,x_k)\le\gamma\tan\angle(x,x_k)
+(1-\gamma)\tan\angle(Bx,x_k).
\end{equation}
In order to prove \eqref{e.2Dest1}, we use \eqref{sineq} again,
together with
$\varphi:=\angle(Bx,x)$, $\vartheta:=\angle(Bx,x_k)$
and the first inequality in \eqref{e.2Dest1a}.
It holds that
\[\tan\angle(y,x_k)\le\tan[\vartheta+\arcsin\big(\gamma\sin(\varphi)\big)]=:f(\gamma).\]
Because of
\[
f'(\gamma)=
\frac
{\big(1+f(\gamma)^2\big)\sin(\varphi)}
{\sqrt{1-\big(\gamma\sin(\varphi)\big)^2}}\ge0
\quad\mbox{for}\quad\gamma\in[0,1],
\]
$f(\gamma)$ is a monotonically increasing function in $[0,1]$. 
The numerator of $f'(\gamma)$ is also a monotonically increasing
function and its denominator is monotonically decreasing in $\gamma\in[0,1]$. 
These two functions are positive so that
$f'(\gamma)$ is also a monotonically increasing function.
Thus $f(\gamma)$ is a convex function in $[0,1]$, and
\[
\tan\angle(y,x_k)\le f(\gamma)\le(1-\gamma)f(0)+\gamma f(1)
=(1-\gamma)\tan(\vartheta)+\gamma\tan(\vartheta+\varphi),
\]
which proves \eqref{e.2Dest1} and hence \eqref{e.2Dest}.

%

\def\refname{\centerline{\footnotesize\rm REFERENCES}}
\def\bibindent{0ex}
\makeatletter 
\renewcommand\@biblabel[1]{#1.\ } 
\makeatother

\end{document}